\documentclass{amsart}

\usepackage{latexsym,amsfonts,amssymb}
\usepackage{mathrsfs}
\usepackage{graphicx,epsfig,subfigure}
\usepackage{psfrag}
\usepackage{enumitem}
\usepackage{colortbl}
\usepackage{hyperref} 

\newtheorem{theorem}{Theorem}[section]

\newtheorem{proposition}[theorem]{Proposition}
\newtheorem{conjecture}[theorem]{Conjecture}
{\theoremstyle{definition}
\newtheorem{definition}[theorem]{Definition}
\newtheorem{remark}[theorem]{Remark}
\newtheorem{example}[theorem]{Example}}

\newcommand{\N}{\mathbb N}

\newcommand{\R}{\mathbb R}
\newcommand{\C}{\mathbb C}

\newcommand{\Jac}{\textnormal{Jac}}

\title[On topological approaches to the Jacobian conjecture in $\C^n$]{On topological approaches to the Jacobian conjecture in $\C^n$}

\author[F. Braun, L.R.G. Dias and J. Venato-Santos]
{Francisco Braun$^{1}$, Luis Renato Gon\c{c}alves Dias$^{2}$ and Jean Venato-Santos$^{3}$}

\address{$^1$ Departamento de Matem\'{a}tica, Universidade Federal de S\~ao Carlos,
13565-905 S\~ao Carlos, S\~ao Paulo, Brazil}
\email{franciscobraun@dm.ufscar.br}

\address{$^2$ and $^3$ Faculdade de Matem\'{a}tica, Universidade Federal de Uberl\^{a}ndia,
38408-100 Uberl\^{a}ndia, Minas Gerais, Brazil}
\email{lrgdias@ufu.br and jvenatos@ufu.br}

\subjclass[2010]{Primary: 14R15; Secondary: 35F05, 35A30.}

\keywords{Jacobian conjecture, global injectivity, locally trivial fibrations, nonproperness set.}

\date{\today}

\begin{document}
	
\maketitle

\begin{abstract} We obtain a structure theorem  for the nonproperness set $S_f$ of a nonsingular polynomial mapping $f:\C^n \to \C^n$. 
Jelonek's results on $S_f$ and our result show that if $f$ is a counterexample to the Jacobian conjecture, then $S_f$ is a hypersurface  such that $S_f\cap Z \neq \emptyset$, for any $Z\subset \C^n$ biregular to $\C^{n-1}$ and $Z = h^{-1}(0)$ for a polynomial submersion $h: \C^n \to \C$. 
Also, we present topological approaches to the Jacobian conjecture in $\C^n$. 
In particular, these conditions are used to extend bidimensional results of Rabier and L\^e and Weber to higher dimensions. 
\end{abstract}

\section{Introduction and statement of the main results}

The main results and the proofs of this paper are strongly motivated by the arguments presented in Krasi\'nski and Spodzieja paper \cite{KS}. 

Let $g=(g_1, g_2,\ldots, g_m): \C^n \to \C^m$ be a holomorphic mapping.
We denote by $\Jac(g)(x)$ the Jacobian matrix of $g$ at $x$. 
When $m = 1$, we  denote this matrix by $\nabla g(x)$. 
In case $m = n$, we denote by $\det \Jac(g)(x)$ the determinant of the Jacobian matrix of $g$ at $x$. 
A point $y \in \C^m$ is a \emph{regular value} of $g$ if for each $x \in g^{-1} (y)$ the matrix $\Jac(g)(x)$ has maximum rank. 
We say that $g$ is \emph{nonsingular} if its range contains only regular values. 
Let $J = \left(i_1, i_2, \ldots, i_\ell \right)$, $  i_1 < i_2 < \ldots < i_\ell$, be  a sequence of integers in $\{1, 2, \ldots, m\}$. We denote by $G_J$ the mapping $G_J = \left(g_{i_1}, g_{i_2}, \ldots, g_{i_\ell} \right): \C^n \to \C^\ell$. 
When $J = \left(1, \ldots, k-1, k+1, \ldots m\right)$, we denote $G_{J}$ by $G_{\widehat{k}}$.  

A mapping $g: \C^n \to \C^m$ is said to be \emph{proper at $y\in\C^m$} if there exists a neighborhood $V$ of $y$ such that $g^{-1}\left(\overline{V}\right)$ is compact. 
The set of points at which $g$ is not proper is denoted by $S_g$. 
We say that $g$ is \emph{proper} if $S_g$ is the empty set $\emptyset$. 
The set $S_g$ has been considered in many problems and applications, see for instance \cite{J93, J99, J, JK, KOS}. 

In this paper we deal with nonsingular polynomial mappings $f: \C^n \to \C^n$, which in this case means that $\det \Jac(f)(x)$ is a non-zero constant. 
The claim that $f$ is a polynomial automorphism is the very known \emph{Jacobian conjecture}, which remains unsolved until these days, see for instance \cite{BCW, E} for details. 
From the well known Hadamard's global inversion theorem and the main result of Cynk and Rusek \cite{CR}, $f$ is an automorphism if and only if it is nonsingular and $S_f$ is the empty set. 
So the Jacobian conjecture will be proved if one shows that $S_f$ is the empty set for any nonsingular polynomial mapping $f:\C^n \to \C^n$. 
The following is a new result on the structure of $S_f$, whose proof is given in Section \ref{s:2}. 
Recall that a mapping $\phi: X \to Y$,  $X \subset \C^n$ and $Y\subset \C^m$ algebraic sets, is a \emph{regular mapping} if $\phi$ is the restriction to $X$ of a polynomial mapping defined in $\C^n$. 
We say that $\phi: X \to Y$ is a \emph{biregular mapping} if $\phi$ and $\phi^{-1}$ are regular mappings, in this case we say that \emph{$X$ is biregular to $Y$}.

\begin{theorem}\label{t:hyper} 
	Let $f: \C^n\to \C^n$ be a nonsingular polynomial mapping. 
	Let $Z = h^{-1}(0)$ biregular to $\C^{n-1}$, where $h: \C^n \to \C$ is a nonsingular polynomial, such that 
	$S_f \cap Z = \emptyset$. Then $f$ is an automorphism.
\end{theorem}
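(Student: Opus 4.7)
My plan is a proof by contradiction. Assume $f$ is not an automorphism, so that by the Cynk--Rusek criterion quoted in the introduction, $S_f \neq \emptyset$.

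First, I use the hypothesis $S_f \cap Z = \emptyset$ to build a covering structure. Since $S_f$ is a closed algebraic subset, there is an open set $U \supset Z$ with $U \cap S_f = \emptyset$. Over $U$ the map $f$ is proper, and being also nonsingular it restricts to a finite unramified covering $f|_{f^{-1}(U)} \colon f^{-1}(U) \to U$ of some degree $k \ge 1$. The simple-connectedness of $Z \cong \C^{n-1}$ trivializes the covering over $Z$ and yields a decomposition $f^{-1}(Z) = Z_1 \sqcup \cdots \sqcup Z_k$ with $f|_{Z_i}$ biholomorphic onto $Z$. Passing to algebra, $H := h \circ f$ is a polynomial submersion with zero set $\bigsqcup Z_i$, so $H = c\, h_1 \cdots h_k$ with pairwise coprime irreducibles $h_i$ defining the $Z_i$, and the nonvanishing of $\nabla H$ forces each $Z_i$ to be smooth; each $f|_{Z_i}$ is then a bijective polynomial map between smooth irreducible affine varieties, hence biregular, and $Z_i$ is biregular to $\C^{n-1}$.

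Second, I reduce the theorem to showing $k = 1$. The complement $\C^n \setminus S_f$ is connected because $S_f$ has complex codimension one, so $|f^{-1}(y)| = k$ for every $y \in \C^n \setminus S_f$. On the other hand, whenever $y \in S_f$ there is a sequence $y_n \to y$ with a preimage escaping to infinity, hence $|f^{-1}(y)| \le k - 1$. If $k = 1$, the image of $f$ must avoid $S_f$ and $f$ is injective; Ax's theorem then gives surjectivity, which combined with the normality of $\C^n$ makes $f$ biregular and forces $S_f = \emptyset$, contradicting the standing hypothesis.

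Finally, the main obstacle is to establish $k = 1$. In dimension two the Abhyankar--Moh--Suzuki theorem lets one simultaneously straighten $Z$ and the $Z_i$'s to parallel lines, after which the Jacobian identity reduces to a one-variable equation that directly forces $k = 1$ and exhibits $f$ as a shear. In arbitrary dimension such a straightening is not available, and my plan is to produce, for each sheet, a local rational inverse $\phi_i \colon U \to \C^n$ of $f$, observe that its polar divisor is an algebraic hypersurface of $\C^n$ disjoint from $Z$, and then exploit the nonsingularity of $h$ and the biregularity of $Z$ to $\C^{n-1}$, along the lines of Krasi\'nski--Spodzieja, to show that such a polar divisor must in fact be empty. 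This would upgrade each $\phi_i$ to a polynomial inverse and force $k = 1$; carrying this extension argument through cleanly in dimension $n \ge 3$, without a straightening statement, is the delicate step.
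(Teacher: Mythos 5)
Your first two steps (the unramified covering of degree $k=\mu(f)$ over $\C^n\setminus S_f$, the decomposition $f^{-1}(Z)=Z_1\sqcup\cdots\sqcup Z_k$ with $f|_{Z_i}$ biregular onto $Z$, the factorization $h\circ f=c\,h_1\cdots h_k$, and the reduction of the theorem to $k=1$) match the paper's proof almost verbatim. But the heart of the matter is precisely the step you leave open: you only sketch a plan for $n\ge 3$ (local rational inverses $\phi_i$, their polar divisors, an unspecified extension argument ``along the lines of Krasi\'nski--Spodzieja'') and you concede that carrying it out is ``the delicate step.'' As written, that is a genuine gap: nothing in the proposal shows the polar divisor of $\phi_i$ is disjoint from $Z$ in a way that forces it to be empty, and no straightening of $Z$ is available, so the proof of $k=1$ is simply missing.

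The paper closes exactly this gap without any straightening, and the missing idea is short: since $Z_i\cap Z_j=\emptyset$ for $i\neq j$, the polynomial $h_j$ does not vanish on $Z_i$; composing with the biregular maps $f|_{Z_i}^{-1}$ and $\psi:\C^{n-1}\to Z$ gives a polynomial $h_j\circ f|_{Z_i}^{-1}\circ\psi$ on $\C^{n-1}$ without zeros, hence constant (a non-constant polynomial on $\C^{n-1}$, $n\ge 2$, always has a zero). So each $h_j$ is constant on $Z_i$, and the Nullstellensatz (using that $H=h\circ f$ is reduced/nonsingular) yields $h_j=\alpha_{ij}+\beta_{ij}h_i$ with $\beta_{ij}$ constant; consequently $H=P(h_1)$ for a one-variable polynomial $P$ of degree $k$. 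Nonsingularity of $H$ then forces $\deg P=1$, i.e.\ $k=1$, since otherwise a root $c$ of $P'$ gives a point of $h_1^{-1}(c)\neq\emptyset$ where $\nabla H=P'(h_1)\nabla h_1$ vanishes. If you replace your polar-divisor plan by this argument, your proof becomes complete and coincides with the paper's.
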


Testing sets $Z$ for Theorem \ref{t:hyper} include, for instance, graphs of polynomial functions of $\C^{n-1}$. 
From Jelonek \cite{J93, J99}, it follows that in this case of nonsingular polynomial mappings $f:\C^n \to \C^n$, the set $S_f$ is either empty or a hypersurface. 
Therefore, if $f$ is a counterexample to the Jacobian conjecture, then the hypersurface $S_f$ is such that $S_f\cap Z\neq \emptyset$ for any algebraic set $Z\subset \C^n$ satisfying the assumptions of the above theorem.

\smallskip 

On the other hand we recall that a continuous mapping $g: X \to Y$ between topological spaces $X$ and $Y$ is a \emph{trivial fibration} if there exist a topological space $\mathcal{F}$ and a homeomorphism $\varphi\colon \mathcal{F}\times Y\to X$ such that $pr_2 = g\circ\varphi$ is the second projection on $Y$. 
We say further that $g$ is a \emph{locally trivial fibration at $y\in Y$} if there exists an open neighborhood $U$ of $y$ in $Y$ such that $g|_{g^{-1}(U)}\colon g^{-1}(U)\to U$ is a trivial fibration. 
We denote by $B(g)$ the set of points of $Y$ where $g$ is not a locally trivial fibration. 
The set $B(g)$ is usually called the \emph{bifurcation} (or \emph{atypical}) set of $g$. 
In case $B(g)$ is the empty set we simply say that $g$ is a \emph{locally trivial fibration}. 

In case $n=2$, as a consequence of Abhyankar and Moh embedding theorem \cite{AM}, L\^e and Weber \cite{LW} presented  the following result. 

\begin{theorem}[\cite{LW}]\label{t:LW}
Let $f = (f_1,f_2)\colon \C^2 \to\C^2$ be a nonsingular polynomial mapping. 
If $B\left(f_1\right) = \emptyset$, then $f$ is an automorphism.
\end{theorem}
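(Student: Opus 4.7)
\emph{Proof plan.}
The plan is to apply Theorem~\ref{t:hyper} with $Z$ a suitable affine line in the codomain $\C^2$. The hypothesis $B(f_1) = \emptyset$, combined with the Abhyankar--Moh--Suzuki theorem, will bring $f$ into a triangular normal form that makes the required condition $S_f \cap Z = \emptyset$ transparent.

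First, since $f$ is nonsingular, $\nabla f_1$ never vanishes, so $f_1\colon \C^2 \to \C$ is a polynomial submersion; combined with $B(f_1) = \emptyset$, it is a locally trivial topological fibration, hence globally trivial because the base $\C$ is contractible. Multiplicativity of the Euler characteristic then forces $\chi(f_1^{-1}(c)) = 1$ for every $c\in\C$, so each fiber is a smooth connected affine curve of Euler characteristic $1$, hence biregular to $\C$. The Abhyankar--Moh--Suzuki theorem then furnishes a polynomial $\tilde f_2$ such that $\varphi := (f_1, \tilde f_2)$ is a polynomial automorphism of $\C^2$. Writing $(u, v) := \varphi(x)$, the composition $f\circ\varphi^{-1}$ has first coordinate $u$, hence equals $(u, G(u, v))$ for some polynomial $G$; the nonsingularity of $f$ forces $\partial_v G$ to be a nonzero constant $\alpha$, whence $G(u, v) = \alpha v + H(u)$ for some $H\in\C[u]$.

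To apply Theorem~\ref{t:hyper}, take $h(y_1, y_2) := y_1$, so that $Z := h^{-1}(0) = \{0\}\times\C \subset \C^2$ is nonsingular and biregular to $\C$. If there were $(0, y_2)\in S_f\cap Z$, we could pick $x_k \to \infty$ with $f(x_k) \to (0, y_2)$ and set $(u_k, v_k) := \varphi(x_k)$; since $\varphi$ is an automorphism, $(u_k, v_k) \to \infty$. But then $(u_k,\, \alpha v_k + H(u_k)) \to (0, y_2)$ forces $u_k \to 0$ and $v_k \to (y_2 - H(0))/\alpha$, a bounded limit, contradicting $(u_k, v_k) \to \infty$. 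Therefore $S_f\cap Z = \emptyset$, and Theorem~\ref{t:hyper} concludes that $f$ is an automorphism.

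The substantive input, and hence the main obstacle, is the Abhyankar--Moh--Suzuki classification of polynomials on $\C^2$ whose general fiber is biregular to $\C$: this ingredient is essentially two-dimensional, which is precisely why a direct extension of L\^e--Weber to $\C^n$ with $n\ge 3$ requires additional hypotheses beyond $B(f_1) = \emptyset$, as the paper goes on to formulate.
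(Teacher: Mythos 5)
Your argument is essentially correct, but it takes a genuinely different route from the paper's. The paper obtains Theorem~\ref{t:LW} as the case $n=2$ of Theorem~\ref{t:Ra-topV}, whose proof runs through Theorem~\ref{t:proper} and Proposition~\ref{p:biregC} (Riemann mapping theorem plus the Rusek--Winiarski regularity theorem \cite{RW}), Jelonek's description of $S_f$, and Cynk--Rusek \cite{CR}; as Remark~\ref{r:gra-lw} stresses, this deliberately avoids Abhyankar--Moh. You instead reintroduce Abhyankar--Moh--Suzuki to promote $f_1$ to a coordinate and reduce $f$ to the triangular form $(u,\alpha v+H(u))$, which is in spirit the original L\^e--Weber argument; the trade-off is exactly the one you name at the end: your proof is self-contained and gives an explicit normal form in dimension $2$, while the paper's machinery is what survives the passage to $\C^n$. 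Three smaller points. First, connectedness of the fibers does not follow from $\chi=1$ alone ($\C\sqcup\C^*$ also has Euler characteristic $1$); it does follow, though, from the global triviality you already established, since $\mathcal{F}\times\C\cong\C^2$ forces $\mathcal{F}$ connected (and simply connected), so this is a slip in the cited reason rather than in the conclusion. Second, the appeal to Abhyankar--Moh tacitly uses that $f_1-c$ is an irreducible polynomial; this is fine, because the fiber is smooth and connected and nonsingularity of $f_1$ rules out multiple factors, but it deserves a line. Third, the final detour through Theorem~\ref{t:hyper} is redundant: once $f\circ\varphi^{-1}(u,v)=(u,\alpha v+H(u))$ with $\alpha\neq 0$, this map is visibly an automorphism, hence so is $f=(f\circ\varphi^{-1})\circ\varphi$, and no properness check along $Z=\{0\}\times\C$ is needed (though the check you give is correct and causes no circularity, since Theorem~\ref{t:hyper} is proved independently).
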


As a consequence, they obtained the following geometrical-tolological formulation of the Jacobian conjecture in $\C^2$: 

\begin{conjecture}[\cite{LW}]\label{con:LW}
Let $f_1 \colon \C^2 \to\C$ be a polynomial function. 
If $B\left(f_1\right) \neq \emptyset$, then for any polynomial function $f_2\colon\C^2 \to \C$ there exists $x \in \C^2$ such that $\det \Jac(f_1,f_2)(x) = 0$. 
\end{conjecture}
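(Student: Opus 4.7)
The plan is to attack the contrapositive: assume there exists a polynomial $f_2$ such that $\det \Jac(f_1, f_2)(x) \neq 0$ for every $x \in \C^2$, and show that $B(f_1) = \emptyset$.

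A first observation makes the nature of the difficulty transparent: combined with Theorem \ref{t:LW}, Conjecture \ref{con:LW} is equivalent to the Jacobian conjecture in $\C^2$. Indeed, granting the conjecture, the contrapositive forces $B(f_1) = \emptyset$ whenever $(f_1, f_2)$ is nonsingular, and Theorem \ref{t:LW} then yields that $(f_1, f_2)$ is an automorphism. Conversely, if JC holds in $\C^2$, then any nonsingular $(f_1, f_2)$ is an automorphism, so $f_1$ is, up to a polynomial automorphism of the target, a coordinate of $\C^2$ and therefore $B(f_1) = \emptyset$, which is exactly the contrapositive of the conjecture.

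For a direct approach, I would begin with a nonsingular $(f_1, f_2): \C^2 \to \C^2$ and assume, for contradiction, that it is not an automorphism. By Jelonek's results, $S_{(f_1, f_2)}$ is then a nonempty hypersurface (an algebraic curve) in $\C^2$. Next I would apply Theorem \ref{t:hyper}: $S_{(f_1, f_2)}$ must meet every $Z = h^{-1}(0)$ that is biregular to $\C$ and arises as the zero set of a nonsingular polynomial $h: \C^2 \to \C$. Such test curves include graphs of polynomial functions of one variable, as well as those fibers $f_1^{-1}(c)$ that happen to be biregular to $\C$. The strategy would be to combine these intersection constraints with the structure of $f_1|_{S_{(f_1,f_2)}}$ and with the bidimensional equality between $B(f_1)$ and the non-properness set of $f_1$ at infinity, in order to force $B(f_1) = \emptyset$ and then reach a contradiction with Theorem \ref{t:LW}.

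The main obstacle is intrinsic: since the statement is equivalent to the two-dimensional Jacobian conjecture, no routine argument can succeed. In particular, one cannot freely assume that the fibers of $f_1$ are biregular to $\C$ (by the Abhyankar--Moh theorem this would already make $f_1$ a coordinate of an automorphism), so the family of test curves available for Theorem \ref{t:hyper} is genuinely restricted. A realistic attack must therefore pair the structure theorem for $S_f$ with essentially new input, for instance finer control of the monodromy of $f_1$ around its conjectural bifurcation values at infinity, or a refinement of Theorem \ref{t:hyper} producing a larger family of forced intersections.
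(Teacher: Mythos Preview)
The statement you are addressing is not a theorem in the paper but a \emph{conjecture}: the paper provides no proof of it. Immediately before stating it, the authors write that L\^e and Weber ``obtained the following geometrical-topological formulation of the Jacobian conjecture in $\C^2$''. So the paper's position is precisely your first observation: Conjecture~\ref{con:LW} is equivalent to the two-dimensional Jacobian conjecture, and it is recorded as an open problem, not as a result to be proved.

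Your analysis of this equivalence is correct and matches the paper's intent. The contrapositive you state is exactly right, and your two-line argument for the equivalence (via Theorem~\ref{t:LW} in one direction, and via the fact that an automorphism has a coordinate with empty bifurcation set in the other) is sound.

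Where your write-up overreaches is in framing the remainder as a ``direct approach'' toward a proof. The strategy you sketch --- invoking Theorem~\ref{t:hyper} to force $S_{(f_1,f_2)}$ to meet many test curves and then hoping to deduce $B(f_1)=\emptyset$ --- cannot close, and you yourself say so in the final paragraph. Since the paper offers no proof to compare against, the honest summary is: there is nothing to prove here within the scope of the paper; your proposal correctly identifies the statement as equivalent to the Jacobian conjecture in $\C^2$, and the ``attack'' portion should be read as heuristic commentary rather than a proof outline.
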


Analytical conditions ensuring locally trivial fibrations are known in the literature. 
So, in view of Theorem \ref{t:LW}, for example, it is expected the use of such conditions to obtain particular cases of the Jacobian conjecture. 
In this context, Rabier \cite{Ra} considered analytical conditions to define the set $\widetilde K_{\infty}(g)$ for holomorphic mappings $ g: \C^n \to \C^m$ (see Definition \ref{d:ra-ity}). 
He then obtained the following result: 
\begin{theorem}[{\cite[Th. 9.1]{Ra}}]\label{t:Ra} Let $f = (f_1,f_2):\C^2 \to \C^2$ be a polynomial mapping.
\begin{enumerate}[label={(\alph*)}]
\rm	\item\it If $f$ is an automorphism, then $f$ is nonsingular and $\widetilde K_{\infty}(f_1)=\emptyset $ and $\widetilde K_{\infty}(f_2)=\emptyset$.
\rm \item \it If $f$ is nonsingular and $\widetilde K_{\infty}(f_1)=\emptyset$, then $f$ is an automorphism.
\end{enumerate}		
\end{theorem}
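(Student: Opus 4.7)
The plan is to handle the two implications separately, leveraging Theorem \ref{t:LW} in the nontrivial direction and the defining property of Rabier's set $\widetilde K_{\infty}$ throughout.

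For part (a), if $f=(f_1,f_2)$ is a polynomial automorphism, set $g:=f^{-1}$, which is again a polynomial automorphism, and observe that $f_i\circ g = \pi_i$, where $\pi_i$ is the $i$-th coordinate projection on $\C^2$. Since each $\pi_i$ is linear it is nonsingular and plainly satisfies $\widetilde K_{\infty}(\pi_i)=\emptyset$. The conclusion $\widetilde K_{\infty}(f_i)=\emptyset$ then reduces to the invariance of $\widetilde K_{\infty}$ under a polynomial automorphism of the source. This invariance should follow directly from Rabier's definition: for a polynomial automorphism $g$ with constant nonzero Jacobian, $\Jac(g)^{-1}$ is again polynomial, so both $\Jac(g)$ and $\Jac(g)^{-1}$ grow only polynomially, and the Rabier functional controlling $f_i$ along a sequence $x_k\to\infty$ is pinched between constant multiples of the corresponding functional for $f_i\circ g=\pi_i$ evaluated at $g^{-1}(x_k)\to\infty$.

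For part (b), the strategy is to convert the hypothesis $\widetilde K_{\infty}(f_1)=\emptyset$ into the topological input required by Theorem \ref{t:LW}. First, the nonsingularity of $f$ forces $f_1$ to be a nonsingular polynomial function: if $\nabla f_1(x_0)=0$ for some $x_0\in\C^2$, the first row of $\Jac(f)(x_0)$ would vanish, contradicting $\det\Jac(f)(x_0)\ne 0$. Hence the critical value set $K_0(f_1)$ is empty. By the defining property of $\widetilde K_{\infty}$, namely that a holomorphic map is a locally trivial fibration over the complement of its critical values together with $\widetilde K_{\infty}$, the map $f_1$ is a locally trivial fibration over $\C\setminus\bigl(K_0(f_1)\cup\widetilde K_{\infty}(f_1)\bigr)=\C$, so $B(f_1)=\emptyset$. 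Applying Theorem \ref{t:LW} to the nonsingular mapping $f$ with $B(f_1)=\emptyset$ now yields at once that $f$ is an automorphism.

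The principal obstacle is located in part (a). Part (b) amounts to a one-line reduction to Theorem \ref{t:LW} once the fibration property of $\widetilde K_{\infty}$ is invoked, whereas rigorously justifying the polynomial-automorphism invariance used in part (a) requires unpacking Rabier's definition, which the excerpt defers to Definition \ref{d:ra-ity}. If this invariance is not already isolated as a lemma in \cite{Ra}, I would supply a short computation tracing how the Rabier function behaves under the change of variables $x\mapsto g(x)$; given the polynomial bounds on $\|\Jac(g)\|$ and $\|\Jac(g)^{-1}\|$, this reduces to comparing two equivalent quantities at infinity and should present no serious analytical difficulty.
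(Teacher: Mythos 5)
Your part (b) is fine: nonsingularity of $f$ forces $\nabla f_1$ to be nowhere zero, Rabier's fibration theorem then gives $B(f_1)\subset K_0(f_1)\cup\widetilde K_{\infty}(f_1)=\emptyset$, and Theorem \ref{t:LW} concludes. This is consistent with the route the paper itself indicates (the statement is quoted from \cite{Ra}; the paper offers no proof, but Remark \ref{r:gra-lw} records that, for nonsingular $f_1:\C^2\to\C$, $\widetilde K_{\infty}(f_1)=\emptyset$ is equivalent to $B(f_1)=\emptyset$ by \cite{Ha, Pa, ST2}, so either Theorem \ref{t:LW} or Theorem \ref{t:Ra-topV} finishes part (b)).

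Part (a), however, has a genuine gap. The invariance of $\widetilde K_{\infty}$ under precomposition with a polynomial automorphism, with ``constant pinching,'' is false, and the polynomial bounds you invoke are exactly what does not suffice. Writing $g=f^{-1}$ and $\pi_i=f_i\circ g$, the chain rule gives $\nabla\pi_i(x)=\nabla f_i(g(x))\,\Jac(g)(x)$, hence only $\|\nabla f_i(y)\|\geq 1/\|\Jac(g)(g^{-1}(y))\|$; since $\|\Jac(g)\|$ is unbounded whenever $g$ is nonlinear, this lower bound can tend to $0$ along a sequence $y_k\to\infty$, so no conclusion about $\widetilde K_{\infty}(f_i)$ follows. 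The Rabier condition is a quantitative lower bound on gradients near infinity, not a topological invariant, and the comparison constants degrade along the sequence. In fact the paper's Example \ref{ex:noGPS} refutes precisely the principle you rely on: there $F_{\widehat{3}}=\pi_{\widehat{3}}\circ f$ with $f$ a polynomial automorphism of $\C^3$, yet $\widetilde K_{\infty}(F_{\widehat{3}})\neq\emptyset$ while $\widetilde K_{\infty}(\pi_{\widehat{3}})=\emptyset$; this is also why Theorem \ref{t:Ra}-(a) does not extend to $n\geq 3$ for the mappings $F_{\widehat{k}}$. Part (a) is thus a genuinely two-dimensional fact and needs two-variable input: for instance, since $f$ is an automorphism, $f_1$ is nonsingular and a (globally) trivial fibration, so $B(f_1)=\emptyset$, and then the equivalence of \cite{Ha, Pa, ST2} (Rabier condition $\Leftrightarrow$ local triviality for nonsingular polynomials on $\C^2$) yields $\widetilde K_{\infty}(f_1)=\emptyset$, and likewise for $f_2$; alternatively one follows Rabier's original argument in \cite{Ra}.
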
 

Rabier also showed that for holomorphic mappings $g: \C^n \to \C^m$, $B(g) \subset \widetilde K_{\infty} (g)$. 
Clearly if $f: \C^n \to \C^n$ is a polynomial automorphism, then $f$ is nonsingular and $B (F_{\widehat{k}})= \emptyset$ for each $k \in \{1,2, \ldots, n\}$. 
On the other hand, Example \ref{ex:noGPS} below shows that for $n\geq 3$ a version of Theorem \ref{t:Ra}-(a) does not hold for the condition $\widetilde K_{\infty} (F_{\widehat{k}}) = \emptyset$, see also Remark \ref{r:gra-lw}. 
It is known that  a locally trivial fibration $g: \C^n \to \C^m$ has its fibers simply connected (see Proposition \ref{p:global}). 
So it turns out that next two results generalize in different manner Theorem \ref{t:LW}  and part (b) of Theorem \ref{t:Ra} to higher dimensions. 

\begin{theorem}\label{t:Ra-topV}
Let $f =(f_1,f_2,...,f_n):\C^n \to \C^n$ be a nonsingular polynomial mapping. 
If the fibers of $F_{\widehat{n}}$ are simply connected out of $B\left(F_{\widehat{n}}\right)$, then $f$ is an automorphism. 
In particular, if $B\left(F_{\widehat{n}}\right) = \emptyset$ then $f$ is an automorphism.
\end{theorem}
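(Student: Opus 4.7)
Plan: Apply Theorem~\ref{t:hyper} after locating $S_f$. Since $\det\Jac(f)$ is a nonzero constant, $\Jac(F_{\widehat{n}})$ has full rank everywhere and $F_{\widehat{n}}:\C^n\to\C^{n-1}$ is a polynomial submersion with smooth affine algebraic curves as fibers. For $y\in U:=\C^{n-1}\setminus B(F_{\widehat{n}})$, the hypothesis makes $F_{\widehat{n}}^{-1}(y)$ simply connected, and a smooth simply connected affine algebraic curve is biregular to $\C$ (its smooth projective completion must be $\mathbb{P}^1$ with exactly one puncture, by the classification of smooth affine curves via fundamental group). On such a fiber $f_n$ is a polynomial in an affine parameter with nowhere vanishing derivative, hence of degree one and a biregular isomorphism onto $\C$.

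These fiberwise isomorphisms assemble to show that $f=(F_{\widehat{n}},f_n)$ restricts to a bijective étale, hence biregular, morphism $F_{\widehat{n}}^{-1}(U)\to U\times\C$. In particular $f$ is proper over every point of $U\times\C$ and therefore
\[ S_f\subset B(F_{\widehat{n}})\times\C. \]
To conclude, I would show $S_f=\emptyset$: by Jelonek's theorem cited in the paper, $S_f$ is empty or a hypersurface of dimension $n-1$; since $B(F_{\widehat{n}})$ is a proper algebraic subset of $\C^{n-1}$ (so $\dim B(F_{\widehat{n}})\le n-2$), the case $\dim B(F_{\widehat{n}})<n-2$ is ruled out by dimension alone, while if $B(F_{\widehat{n}})$ is a hypersurface every $(n-1)$-dimensional irreducible component of $S_f$, being contained in the $(n-1)$-dimensional set $B(F_{\widehat{n}})\times\C$, must coincide with an irreducible component $B_j\times\C$ of that product. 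But then $\{y_0\}\times\C\subset S_f$ for every $y_0\in B_j$, forcing $F_{\widehat{n}}^{-1}(y_0)=\emptyset$ for every such $y_0$. With $S_f=\emptyset$ in hand, Theorem~\ref{t:hyper} applied to any admissible $Z$ (for instance $Z=\{y_n=0\}$) yields the automorphism conclusion.

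The hard part is the final contradiction: one needs that the polynomial submersion $F_{\widehat{n}}:\C^n\to\C^{n-1}$ is surjective, so that $F_{\widehat{n}}^{-1}(y_0)=\emptyset$ is impossible. I expect this from the Hilbert Nullstellensatz, via the observation that if $g$ were a nontrivial polynomial vanishing on the complement of $F_{\widehat{n}}(\C^n)$, the polynomial $g\circ F_{\widehat{n}}$ would be nowhere vanishing on $\C^n$, hence a nonzero constant, trapping $F_{\widehat{n}}(\C^n)$ inside a proper hypersurface of $\C^{n-1}$ and contradicting the submersion property of $F_{\widehat{n}}$.
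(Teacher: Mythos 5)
Your first half runs parallel to the paper's own argument: on a fiber over $y\notin B(F_{\widehat{n}})$, simple connectedness gives a curve biregular to $\C$ on which $f_n$ restricts to a degree-one isomorphism (this is Proposition \ref{p:biregC}, which the paper proves via the Riemann mapping theorem and \cite{RW}, while you invoke the classification of smooth simply connected affine curves -- same content), and assembling these isomorphisms shows $f$ is bijective over $U\times\C$ with $U=\C^{n-1}\setminus B(F_{\widehat{n}})$, hence $S_f\subset B(F_{\widehat{n}})\times\C$; this is statement \ref{enSfBf} of Theorem \ref{t:proper}. One small repair: $B(F_{\widehat{n}})$ is not known to be algebraic, only contained in an algebraic hypersurface $Z\subset\C^{n-1}$ (\cite{J03}, \cite{Ve}), so your dimension/irreducible-component discussion must be carried out with $Z\times\C$ in place of $B(F_{\widehat{n}})\times\C$; this is harmless. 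Where you and the paper part ways is after this point: the paper simply picks $y\notin\pi_{\widehat{n}}^{-1}(Z)$, notes $\#f^{-1}(y)=1$, concludes $\mu(f)=1$ from \eqref{Jf}, hence $f$ is injective and is an automorphism by \cite{CR} -- no appeal to Theorem \ref{t:hyper}, to $S_f=\emptyset$, or to the image of $F_{\widehat{n}}$ is needed.

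Your longer route has two genuine soft spots, both at the step you yourself call the hard part. First, the assertion that $\{y_0\}\times\C\subset S_f$ ``forces'' $F_{\widehat{n}}^{-1}(y_0)=\emptyset$ is not justified: non-properness of $f$ at a point does not, by itself, say the point is omitted by $f$. What makes it true here is that your bijectivity over the dense open set $U\times\C$ gives $\mu(f)=1$, and by the discussion preceding \eqref{Jf} every $z\in S_f$ satisfies $\#f^{-1}(z)<\mu(f)=1$, so $S_f\cap f(\C^n)=\emptyset$. But the moment you say this you have already proved $f$ injective, and \cite{CR} finishes the proof -- the entire detour through $S_f=\emptyset$, the surjectivity discussion and Theorem \ref{t:hyper} becomes superfluous. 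Second, your closing Nullstellensatz argument is stated with the inclusion backwards: for $g\circ F_{\widehat{n}}$ to be nowhere vanishing you need the zero set of $g$ to be contained in the complement of the image, not that $g$ vanishes on that complement; moreover full surjectivity of a polynomial submersion $\C^n\to\C^{n-1}$ is more than you need and is not what this argument yields. The statement you actually need is weaker and is correct: the image cannot be disjoint from an entire irreducible hypersurface $Z_j=\{h=0\}$, since otherwise $h\circ F_{\widehat{n}}$ would be a nowhere vanishing polynomial, hence a nonzero constant $c$, trapping the (open, Zariski-dense) image of the submersion $F_{\widehat{n}}$ inside $\{h=c\}$. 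So your plan can be repaired, but as written its pivotal implication is unproved, and the missing ingredient is exactly the $\mu(f)=1$ argument with which the paper concludes directly.
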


\begin{theorem}\label{t:main}
	Let $f = (f_1,f_2,...,f_n):\C^n \to\C^n$ be a nonsingular polynomial mapping.
	Assume that the connected components of the fibers of $F_{\widehat{k}}: \C^n \to \C^{n-1}$ out of $B(F_{\widehat{k}})$ are simply connected, for all $k\in \{2, \ldots, n\}$.
	Then $f$ is an automorphism. 
\end{theorem}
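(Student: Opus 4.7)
The plan is to deduce Theorem \ref{t:main} from Theorem \ref{t:Ra-topV} by upgrading the hypothesis on $F_{\widehat{n}}$: I would show that, under the assumption of Theorem \ref{t:main}, the fibers of $F_{\widehat{n}}$ out of $B(F_{\widehat{n}})$ are not merely disjoint unions of simply connected components but are actually connected. Once this is established each such fiber becomes simply connected, and Theorem \ref{t:Ra-topV} immediately delivers the conclusion.

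The first step is a rigidity observation to be used repeatedly: fix $k$ and a value $c \notin B(F_{\widehat{k}})$; each connected component $C$ of $F_{\widehat{k}}^{-1}(c)$ is a smooth simply connected affine complex curve, hence biregular to $\C$. Since the Jacobian of $f$ never vanishes, the restriction $f_k|_C$ has nowhere vanishing derivative on $C\simeq\C$ and is therefore an affine automorphism of $\C$. Consequently $f|_C$ is a polynomial isomorphism onto $\{c\}\times\C \subset \C^{n-1}\times\C$, and multiple components of a single fiber of $F_{\widehat{k}}$ are precisely what would prevent $f$ from being injective.

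To establish connectedness of the $F_{\widehat{n}}$-fibers I would argue by induction on $n$. For the base case $n=2$ the hypothesis reduces to the components of the fibers of $f_1$ being simply connected, and one may combine the rigidity observation above with the techniques of Krasi\'nski and Spodzieja \cite{KS} (in the same spirit as the proof of Theorem \ref{t:LW}) to rule out multiple components. For the inductive step $n\geq 3$, fix a generic $c_1\in\C$ and restrict $f$ to $X_{c_1}=f_1^{-1}(c_1)$; note that for every $k\in\{2,\ldots,n\}$ the coordinate $f_1$ already belongs to $F_{\widehat{k}}$, so the fibers of $\tilde F_{\widehat{k}}=(f_2,\ldots,\widehat{f_k},\ldots,f_n)|_{X_{c_1}}$ coincide with the fibers of $F_{\widehat{k}}$ lying in $X_{c_1}$, and hence inherit the assumption that their connected components are simply connected. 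If one can guarantee that $X_{c_1}$ is biregular to $\C^{n-1}$, applying the inductive hypothesis to $\tilde f=(f_2,\ldots,f_n)|_{X_{c_1}}$ will yield that $\tilde f$ is an automorphism and therefore that each $F_{\widehat{n}}$-fiber meets $X_{c_1}$ in a single point; letting $c_1$ vary gives the desired connectedness.

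The principal obstacle is precisely establishing the biregularity $X_{c_1}\simeq \C^{n-1}$, which threatens to be circular because this is essentially the kind of information Theorem \ref{t:hyper} is meant to extract from the absence of $S_f$. I would try to resolve this either by running the induction in the more flexible category of smooth affine varieties where the analogous components-simply-connected hypothesis makes sense, bootstrapping biregularity to $\C^{n-1}$ from the rigidity observation applied in lower dimension, or by invoking Theorem \ref{t:hyper} directly to reduce matters to showing $S_f \cap X_{c_1}=\emptyset$ for a well-chosen generic $c_1$ and then using the partial information provided by the other $F_{\widehat{k}}$. A secondary technical point will be the careful handling of bifurcation sets under restriction, ensuring that a generic $c_1$ is available for which all required fibration properties on $X_{c_1}$ are preserved.
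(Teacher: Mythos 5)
Your reduction to Theorem \ref{t:Ra-topV} hinges on the step you yourself flag: identifying a generic fiber $X_{c_1}=f_1^{-1}(c_1)$ biregularly with $\C^{n-1}$ so that the induction can be applied to $(f_2,\ldots,f_n)|_{X_{c_1}}$. This is a genuine gap, not a technical one. Nothing in the hypotheses of Theorem \ref{t:main} controls the topology of the hypersurface $X_{c_1}$: the assumptions constrain only the curves $F_{\widehat{k}}^{-1}(\mathrm{pt})$, and they do not even give simple connectedness of $X_{c_1}$, let alone biregularity to $\C^{n-1}$. Moreover, even full contractibility of a smooth affine $(n-1)$-fold does not imply it is isomorphic to $\C^{n-1}$ (exotic affine spaces exist in dimension $\geq 3$, and already in dimension $2$ contractible surfaces need not be $\C^2$), so there is no soft way to ``bootstrap'' this. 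Worse, the statement you need is essentially a consequence of the theorem you are proving (if $f$ is an automorphism, $f_1$ is a coordinate and its fibers are $\C^{n-1}$), so the argument is circular exactly where you suspect. Your fallback ``invoke Theorem \ref{t:hyper} with $Z=X_{c_1}$'' runs into the same wall, since Theorem \ref{t:hyper} requires $Z$ biregular to $\C^{n-1}$. Note also that upgrading ``components simply connected'' to ``fibers connected'' for $F_{\widehat{n}}$ is precisely the open question the authors state at the end of the paper, so the very first line of your plan is not available by elementary means. (Your base case $n=2$ is fine; it is essentially Proposition \ref{teoR2}.)

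The paper avoids all of this with no induction: applying statement \ref{enSf} of Theorem \ref{t:proper} for every $k\in\{2,\ldots,n\}$ gives $S_f=\pi_{\widehat{k}}^{-1}\left(\pi_{\widehat{k}}(S_f)\right)$ for each such $k$, i.e.\ $S_f$ is a cylinder in the directions $x_2,\ldots,x_n$, hence of the form $B\times\C^{n-1}$ with $B\subset\C$. Since $S_f$ is empty or a hypersurface, some coordinate hyperplane $Z=\{z\}\times\C^{n-1}$ misses $S_f$; this $Z$ is $h^{-1}(0)$ for the nonsingular polynomial $h=x_1-z$ and is trivially biregular to $\C^{n-1}$, so Theorem \ref{t:hyper} concludes. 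The key move you missed is to use the hypotheses for all $k\geq 2$ simultaneously to constrain the shape of $S_f$ and then test properness on a coordinate hyperplane (a fiber of $x_1$, not of $f_1$), rather than trying to make a fiber of $f_1$ play the role of $\C^{n-1}$.
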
 

We point out that it is enough to test the simply connectedness in the hypotheses of above theorems over any open set of $\C^{n-1}$. 
This is so because $B\left(F_{\widehat{k}}\right)$ is always contained in a hypersurface of $\C^{n-1}$, see details in the proof of Theorem \ref{t:proper}. 

In Section \ref{s:2}, we relate to each other the sets $S_f$ and $B\left(F_{\widehat{k}}\right)$ in Theorem \ref{t:proper} and apply it in the proofs of theorems \ref{t:Ra-topV} and \ref{t:main}. 

Now analogously to Conjecture \ref{con:LW}, as a direct application of Theorem \ref{t:Ra-topV} we obtain the following equivalence of the Jacobian conjecture in $\C^n$: 

\begin{conjecture}
Let $F_{\widehat{n}} = (f_1, f_2, \ldots, f_{n-1}) \colon \C^n \to \C^{n-1}$ be a polynomial mapping. 
If $B(F_{\widehat{n}}) \neq \emptyset$, then for any polynomial function $f_n\colon \C^n\to\C$ there exists $x \in \C^n$ such that $\det \Jac\left(F_{\widehat{n}}, f_n \right)(x) = 0$.
\end{conjecture}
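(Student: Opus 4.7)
The plan is to establish the displayed statement as an equivalent reformulation of the Jacobian conjecture in $\C^n$, with Theorem \ref{t:Ra-topV} supplying the substantive direction and a brief Nullstellensatz argument supplying the converse. Since the statement is presented as a conjecture but flagged as an ``equivalence of the Jacobian conjecture'', what must be proved is precisely the equivalence, and both implications should be argued.

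For the implication from the displayed conjecture to the Jacobian conjecture, let $f = (f_1,\ldots,f_n):\C^n\to\C^n$ be nonsingular and set $F_{\widehat{n}} = (f_1,\ldots,f_{n-1})$. Since $\det \Jac f$ is a nonzero constant, it is in particular nowhere zero on $\C^n$, so the contrapositive of the displayed conjecture immediately forces $B(F_{\widehat{n}}) = \emptyset$; Theorem \ref{t:Ra-topV} then yields that $f$ is an automorphism.

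For the converse, assume the Jacobian conjecture in $\C^n$ and let $F_{\widehat{n}}:\C^n\to\C^{n-1}$ be polynomial with $B(F_{\widehat{n}}) \neq \emptyset$. Suppose, for contradiction, that some polynomial $f_n:\C^n\to\C$ satisfies $\det \Jac(F_{\widehat{n}},f_n)(x) \neq 0$ for every $x$. Because $\C$ is algebraically closed, any polynomial on $\C^n$ that vanishes nowhere is a nonzero constant (a unit in $\C[x_1,\ldots,x_n]$, by the Nullstellensatz), so the map $f := (F_{\widehat{n}}, f_n)$ is nonsingular in the paper's sense. By the Jacobian conjecture, $f$ is an automorphism. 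Writing $\pi:\C^n\to\C^{n-1}$ for the projection onto the first $n-1$ coordinates, one has $F_{\widehat{n}} = \pi\circ f$, and the map $\varphi:\C\times\C^{n-1}\to\C^n$ given by $\varphi(t,y) := f^{-1}(y_1,\ldots,y_{n-1},t)$ is a homeomorphism satisfying $F_{\widehat{n}}\circ\varphi = pr_2$. Hence $F_{\widehat{n}}$ is a trivial fibration, forcing $B(F_{\widehat{n}}) = \emptyset$ and contradicting the choice of $F_{\widehat{n}}$.

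I expect no real obstacle here: the only substantive step is the appeal to Theorem \ref{t:Ra-topV}, already established in the paper, while the converse is a short formal consequence of the automorphism property once the Nullstellensatz is invoked to upgrade ``nowhere-vanishing Jacobian determinant'' to ``nonzero-constant Jacobian determinant'' (the paper's definition of nonsingularity). This is consistent with the authors' own description of the statement as a direct application of Theorem \ref{t:Ra-topV}.
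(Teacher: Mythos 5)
Your proposal is correct and matches the paper's (implicit) argument: the statement is an open reformulation of the Jacobian conjecture, so the only thing to establish is the equivalence, and the paper obtains the substantive direction exactly as you do, by applying Theorem \ref{t:Ra-topV} after using the Nullstellensatz to upgrade a nowhere-vanishing Jacobian determinant to a nonzero constant. The converse direction (an automorphism $f$ makes $F_{\widehat{n}} = \pi_{\widehat{n}}\circ f$ a trivial fibration, so $B(F_{\widehat{n}})=\emptyset$) is the routine one the paper leaves unstated, and your explicit trivialization $\varphi$ is valid.
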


Another application of Theorem \ref{t:Ra-topV} is a topological proof of the bijectivity of the nonsingular mappings $I+H:\C^4\to\C^4$, with $I$ the identity and $H$ a homogeneous polynomial of degree three, that appear in Hubbers classification \cite{hubbers}, see Remark \ref{r:Hu} for details. 

We end the paper with a result in $\C^2$. 

\begin{proposition}\label{teoR2}
Let $f_1: \C^2 \to \C$ be a nonsingular polynomial function.
The following statements are equivalent:
\begin{enumerate}[label={(\alph*)}]
\rm\item\label{en:a}\it The connected components of a fiber $f_1^{-1}(c)$ are simply connected. 
\rm\item\label{en:b}\it There exists a polynomial $f_2: \C^2 \to \C$ such that the mapping $(f_1, f_2): \C^2 \to \C^2$ is an automorphism.
\end{enumerate}
\end{proposition}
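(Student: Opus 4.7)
The implication (b)$\Rightarrow$(a) is immediate: if $(f_1,f_2)$ is a polynomial automorphism of $\C^2$, then $f_2$ restricts to a biregular isomorphism $f_1^{-1}(c)\to\{c\}\times\C$ for every $c\in\C$, so each fiber of $f_1$ is connected and biregular to $\C$, in particular simply connected.

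For (a)$\Rightarrow$(b), the plan is to rectify a simply connected component of $f_1^{-1}(c)$ by Abhyankar--Moh and then use nonsingularity together with (a) applied to the remaining components to force $f_1$ to be linear after an automorphism of $\C^2$. Since $f_1$ is nonsingular, $f_1^{-1}(c)$ is smooth and reduced; its connected components are therefore smooth, pairwise disjoint, and irreducible. Pick a component $C$: being a smooth irreducible affine curve with trivial fundamental group, it is biregular to $\C$, and Abhyankar--Moh yields an automorphism $\phi$ of $\C^2$ with $\phi(C)=\{y=0\}$. Replacing $f_1$ by $f_1\circ\phi^{-1}$ one may assume $C=\{y=0\}$, whence $f_1-c = y\,g(x,y)$ for some $g\in\C[x,y]$; nonsingularity along $\{y=0\}$ forces $g(x,0)=\partial_y f_1(x,0)$ to be a nowhere vanishing one-variable polynomial, hence a nonzero constant $\alpha$. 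In particular $\{g=0\}$ is disjoint from $\{y=0\}$, so the remaining components of $f_1^{-1}(c)$ all lie in $\C\times\C^*$.

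By (a), each such component $C'\subset\C\times\C^*$ is itself biregular to $\C$. The key observation is that then $C'$ must be a horizontal line $\{y=y_0\}$ for some $y_0\in\C^*$: the restriction to $C'$ of the $y$-coordinate is a regular map $\C\to\C^*$, i.e., a nowhere vanishing polynomial on $\C$, so it is a nonzero constant. Consequently $f_1^{-1}(c)=\{y=0\}\cup\{y=y_1\}\cup\cdots\cup\{y=y_r\}$ with the $y_i\in\C^*$ pairwise distinct, and reducedness yields $f_1-c=\lambda\, y(y-y_1)\cdots(y-y_r)$ for some $\lambda\in\C^*$. But this polynomial depends only on $y$, so if $r\geq 1$ its derivative has a root in $\C$ and $f_1$ acquires a critical point, contradicting nonsingularity. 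Hence $r=0$, $f_1-c=\lambda y$ in the rectified coordinates, and taking $f_2$ to be the first coordinate of $\phi$ makes $(f_1,f_2)$ a polynomial automorphism of $\C^2$.

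The main subtlety I anticipate is handling a possibly disconnected fiber $f_1^{-1}(c)$: ruling out extraneous components is exactly what the horizontal-line observation above accomplishes, and it is the heart of the argument. Once this is available, the remaining conclusions follow from Abhyankar--Moh together with the elementary fact that a non-constant one-variable complex polynomial has a root.
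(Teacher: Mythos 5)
Your proof is correct, but it takes a genuinely different route from the paper's in the key step of ruling out extra components of the fiber. Both arguments rest on the same two pillars: the fact that a simply connected smooth affine curve is biregular to $\C$ (this is exactly the content of Proposition \ref{p:biregC}, proved in the paper via the Riemann mapping theorem and Rusek--Winiarski; you assert it without proof, so a citation there would be appropriate) and the Abhyankar--Moh theorem. The paper first proves that the fiber is connected by the algebraic argument of its proof of Theorem \ref{t:hyper}: writing $f_1=\gamma q_1\cdots q_d$ with $q_j$ irreducible, it shows each $q_j$ is constant on the other components (a nonconstant polynomial on a copy of $\C$ has a zero), uses the Nullstellensatz to get $f_1=P(q_1)$, and lets nonsingularity force $d=1$; only then does it invoke Abhyankar--Moh to rectify the unique component and produce $f_2$. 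You instead rectify one component first, so that $f_1-c=y\,g$ with $g$ nonvanishing on $\{y=0\}$, and then eliminate the remaining components by your horizontal-line observation: a component inside $\C\times\C^*$ that is biregular to $\C$ has constant $y$-coordinate, so $f_1-c=\lambda\,y(y-y_1)\cdots(y-y_r)$ depends on $y$ alone and its derivative would vanish somewhere if $r\geq 1$. This is in effect the paper's final nonsingularity argument ($f_1=P(q_1)$ with $\deg P\geq 2$ forces a critical point) transported into the rectified coordinates; your version is more concrete and bypasses the Nullstellensatz step, at the price of being tied to the planar coordinates supplied by Abhyankar--Moh, whereas the paper's component-counting argument is the same machinery that drives its $n$-dimensional Theorem \ref{t:hyper}. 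Two points you compress but which are easily filled in: each remaining component actually equals (not merely lies inside) a line $\{y=y_i\}$, because it is a closed one-dimensional subset of an irreducible curve; and the factorization $f_1-c=\lambda\,y\prod_i(y-y_i)$ uses that every irreducible factor of $f_1-c$ has a nonempty zero set contained in the union of these lines and occurs with multiplicity one, again by nonsingularity.
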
 

Therefore the assumptions on a nonsingular polynomial function $f_1 :\C^2 \to\C$ in theorems \ref{t:Ra-topV}, \ref{t:main} and Proposition \ref{teoR2} are equivalent. 
 Moreover, these assumptions are equivalent to $B\left(f_1\right) = \emptyset$. 
From the results of Krasi\'nski and Spodzieja \cite[Theorem 4.1]{KS}, it also follows that the above conditions are equivalent to the Hamiltonian vector field of $f_1$ defined in $\C[x]$ to be onto $\C[x]$. 


\section{Proofs of the theorems}\label{s:2}

The following is a well known result on fibrations, see for instance \cite[11.6]{St51}. 

\begin{proposition}\label{p:global}
If $g\colon X\to Y$ is a locally trivial fibration and $Y$ is a contractible space, then $g$ is a trivial fibration.
\end{proposition}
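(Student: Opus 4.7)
The plan is to construct a fiber-preserving homeomorphism $\varphi\colon \mathcal{F}\times Y\to X$, where $\mathcal{F}:=g^{-1}(y_0)$ is the fiber over any chosen basepoint $y_0\in Y$, by exploiting the homotopy lifting property (HLP) that every locally trivial fibration enjoys over a sufficiently well-behaved base. Since $Y$ is contractible, I first fix a continuous map $H\colon Y\times[0,1]\to Y$ with $H(y,0)=y_0$ and $H(y,1)=y$ for every $y\in Y$; this is the ``reverse contraction'' that deforms the constant map $y_0$ into the identity of $Y$.

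To define $\varphi$, I would apply the HLP to the homotopy $\mathcal{F}\times Y\times[0,1]\to Y$, $(p,y,t)\mapsto H(y,t)$, using as initial lift the inclusion $\alpha_0(p,y):=p\in X$ (well-defined since $g(p)=y_0=H(y,0)$). This yields a homotopy $\alpha\colon \mathcal{F}\times Y\times[0,1]\to X$ with $\alpha(\cdot,\cdot,0)=\alpha_0$ and $g(\alpha(p,y,t))=H(y,t)$; setting $\varphi(p,y):=\alpha(p,y,1)$ produces a continuous map satisfying $g\circ\varphi=pr_2$, hence a morphism of fibrations over $Y$. Symmetrically, lifting the reversed homotopy $(x,t)\mapsto H(g(x),1-t)$ along $g$ starting from $\mathrm{id}_X$ produces a retraction $r\colon X\to\mathcal{F}$, and I set $\psi(x):=(r(x),g(x))\in \mathcal{F}\times Y$ as a candidate inverse for $\varphi$.

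The main obstacle is verifying that $\varphi$ and $\psi$ are genuinely mutually inverse, since HLP-lifts are unique only up to fiber-preserving homotopy: the naive compositions $\varphi\circ\psi$ and $\psi\circ\varphi$ are a priori only homotopic (through fiber-preserving homotopies) to the identities. The standard remedy is to pass to a locally finite trivializing cover $\{U_i\}$ of $Y$ (which exists because $Y$ is paracompact, automatic in the complex-analytic settings to which the proposition is applied in the paper), take a subordinate partition of unity, and glue the local product structures inductively using the contraction $H$ so as to obtain on the nose equality. This is precisely the argument carried out in \S 11.6 of Steenrod's book \cite{St51} cited in the excerpt, and it produces the required fiber-preserving homeomorphism trivializing $g$.
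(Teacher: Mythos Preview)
The paper does not actually prove this proposition: it states it as well known and simply refers to \cite[11.6]{St51}. Your proposal ultimately lands on the same citation, so in that sense it aligns with the paper.

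Your added sketch via the homotopy lifting property is honest about its own limitation --- HLP for Hurewicz fibrations yields only a fiber-homotopy equivalence between $g$ and the projection $\mathcal{F}\times Y\to Y$, not a homeomorphism --- and you correctly defer to Steenrod for the upgrade. One small correction: the argument in \S 11.6 of \cite{St51} is not really ``partition-of-unity gluing of local product structures'' as you phrase it. Rather, Steenrod applies his covering homotopy theorem for \emph{bundle maps} (11.3--11.5), which operates at the level of coordinate-bundle morphisms and therefore produces an actual bundle equivalence over $Y$ between $g$ and the pullback of $g$ along the constant map at $y_0$; the latter is visibly trivial. The paracompactness hypothesis you mention enters in Steenrod's proof of that bundle covering homotopy theorem, not in a separate gluing step afterwards.
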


The following proposition will be used in the sequel to prove our main results.  

\begin{proposition}\label{p:biregC}
Let $f = (f_1,f_2,...,f_n):\C^n \to\C^n$ be a nonsingular polynomial mapping and $k  \in \{1,2, \ldots, n\}$.
Let $Y$ be a connected component of a fiber of $F_{\widehat{k}} $.
If $Y$ is a simply connected set, then $f_k|_Y: Y \to \C$ is a biregular function.
\end{proposition}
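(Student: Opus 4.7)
The plan is to analyze $Y$ using the classification of smooth affine complex curves, after first extracting geometric consequences of the nonsingularity of $f$.

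First, I would observe that since $\det\Jac(f)$ is a nonzero constant, the gradients $\nabla f_1(p),\ldots,\nabla f_n(p)$ are linearly independent at every $p\in\C^n$. Hence $F_{\widehat{k}}:\C^n\to\C^{n-1}$ is a submersion, each fiber $F_{\widehat{k}}^{-1}(c)$ is a smooth algebraic subvariety of $\C^n$ of pure dimension one, and in a smooth variety the irreducible components are pairwise disjoint, so the connected component $Y$ is itself an irreducible smooth affine algebraic curve, closed in $\C^n$. The same rank argument shows that $f_k|_Y$ has no critical points: if $df_k(p)$ vanished on $T_pY=\ker dF_{\widehat{k}}(p)$ at some $p\in Y$, then the span of $df_1(p),\ldots,df_n(p)$ would have dimension at most $n-1$, contradicting nonsingularity of $f$. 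In particular $f_k|_Y$ is a nonconstant regular map that is a local biholomorphism at every point.

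Second, I would invoke the classification of smooth irreducible affine complex algebraic curves. Any such $Y$ is the complement of a nonempty finite set of points in a smooth projective curve $\overline Y$ of some genus $g\geq 0$, and elementary computations of fundamental groups show that $Y$ is simply connected if and only if $g=0$ and exactly one point is removed, in which case $Y$ is biregular to $\C$. So the hypothesis that $Y$ is simply connected yields a biregular isomorphism $\psi:\C\to Y$.

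Finally, the composition $g:=f_k\circ\psi:\C\to\C$ is a one-variable polynomial whose derivative $g'(t)=df_k(\psi(t))\cdot\psi'(t)$ never vanishes on $\C$; thus $g'$ is a nonzero constant and $g$ has degree one. Such a $g$ is biregular, so $f_k|_Y=g\circ\psi^{-1}$ is biregular as a composition of biregular maps. The main potential obstacle is the invocation of the classification of simply connected smooth affine complex curves; once that classical fact is taken as known, the rest of the argument reduces to a handful of linear algebra and submersion observations.
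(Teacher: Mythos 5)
Your proof is correct, and the overall architecture is the same as the paper's: reduce to showing that $Y$ is biregular to $\C$, then observe that $f_k$ composed with such a parametrization is a one-variable polynomial with nowhere-vanishing derivative, hence of degree one and invertible. The difference lies in how you reach the key intermediate fact that a simply connected component $Y$ is biregular to $\C$. The paper argues transcendentally: by uniformization $Y$ is biholomorphic to $\C$, and then the Rusek--Winiarski regularity criterion upgrades this biholomorphism to a biregular map. You instead invoke the algebraic classification of smooth irreducible affine curves as a smooth projective curve of genus $g$ minus $n\geq 1$ points, compute that the fundamental group is free of rank $2g+n-1$, and conclude $g=0$, $n=1$, so $Y\cong\mathbb{A}^1$ algebraically from the outset. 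Your route avoids the analytic-to-algebraic transfer entirely (no Riemann mapping theorem, no \cite{RW}), at the cost of invoking the structure theory of affine curves; the paper's route is shorter given those two citations. Your additional observation that $f_k|_Y$ has no critical points (via the linear-algebra argument on $\ker dF_{\widehat{k}}(p)$) is a clean way to justify that $g'$ never vanishes, a point the paper leaves implicit in the phrase ``locally invertible function.'' Both arguments are complete.
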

\begin{proof}
In this proof we follow a reasoning of \cite[p. 310]{KS}. 
Since $Y$ is a simply connected nonsingular algebraic curve, it follows from the Riemann mapping theorem that it is biholomorphic to $\C$.
We call $\phi : Y \to \C$ this biholomorphism. 
From \cite[Theorem 4]{RW} it follows that $\phi$ is a biregular mapping.
The composite function $g = f_k  \circ \phi^{-1}: \C \to \C$ is a polynomial locally invertible function, i.e. $g(z) = a z + b$, with $a, b\in \C$ and $a\neq 0$.
Therefore, $f_k|_Y = g \circ \phi$ is a biregular mapping, which completes the proof. 
\end{proof}

Let $f:\C^n \to \C^n$ be a nonsingular polynomial mapping. 
From \cite{J93, J99}, we know that the nonproperness set $S_f$ is either empty or it is a hypersurface. 
Then $\C^n \setminus S_f$ is a connected subset of $\C^n$ (see for instance \cite[Lemma 8.1]{J}). 
Since $f$ is a local homeomorphism, it follows that $f$ is a dominant mapping and it is an analytic cover of geometric degree $\mu(f)$ on $\C^n \setminus S_f$. 
Thus $\# f^{-1}(y) = \mu(f)$ for any $y\in \C^n\setminus S_f$.
Now, the fact that $f$ is a local homeomorphism implies that $\# f^{-1}(z) < \mu(f)$, for any $z\in S_f$. Therefore, we have
\begin{equation}\label{Jf}
S_f = \left\{ y \in \C^n \mid \# f^{-1}(y) \neq \mu(f) \right\}.
\end{equation}
For any $1\leq k \leq n$, we denote by $\pi_{\widehat{k}}: \C^n \to\C^{n-1}$ the projection $\pi_{\widehat{k}}(x_1,...,x_n)=(x_1,..,x_{k-1},x_{k+1},..,x_n)$. 
With the above notations we have:

\begin{theorem}\label{t:proper}
Let $f = (f_1,f_2,...,f_n):\C^n \to\C^n$ be a nonsingular polynomial mapping and $k \in \{1,2, \ldots, n\}$.
Assume that the connected components of the fibers of $F_{\widehat{k}}$ out of $B(F_{\widehat{k}})$ are simply connected. 
Then
\begin{enumerate}[label={(\alph*)}]
\rm\item\label{enSfBf}\it $S_f \subset \pi_{\widehat{k}}^{-1}(Z)$ for any algebraic set $Z\subset \C^{n-1}$ such that $B(F_{\widehat{k}})\subset Z$.
		
\vspace{0.15cm}
		
\rm\item\label{enSf} \it $S_f = \pi_{\widehat{k}}^{-1}\left(\pi_{\widehat{k}}(S_f)\right).$
\end{enumerate}
\end{theorem}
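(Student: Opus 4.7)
The plan is to first establish an auxiliary structural fact about $B(F_{\widehat{k}})$, then use Proposition \ref{p:biregC} to count preimages of $f$ along the vertical lines $\pi_{\widehat{k}}^{-1}(c)$, deduce part \ref{enSfBf}, and finally derive part \ref{enSf} by a dimension count on the irreducible components of $S_f$.

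To begin, I would observe that $\Jac(F_{\widehat{k}})$ has full rank $n-1$ everywhere (since $\det\Jac(f)$ is a nonzero constant), so $F_{\widehat{k}}$ is a submersion with no critical values. By Ehresmann's theorem, a nonsingular polynomial mapping is a locally trivial fibration at each point where it is proper, hence $B(F_{\widehat{k}}) \subset S_{F_{\widehat{k}}}$; and by Jelonek's theorem, $S_{F_{\widehat{k}}}$ is either empty or an algebraic hypersurface in $\C^{n-1}$. In particular, $B(F_{\widehat{k}})$ is contained in an algebraic hypersurface of $\C^{n-1}$, whose complement is connected and dense.

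For part \ref{enSfBf}, fix $y$ with $c := \pi_{\widehat{k}}(y) \notin Z$, so in particular $c \notin B(F_{\widehat{k}})$. By hypothesis every connected component $Y$ of $F_{\widehat{k}}^{-1}(c)$ is simply connected, and Proposition \ref{p:biregC} then identifies $f_k|_Y$ with a biregular isomorphism onto $\C$. Hence each component contributes exactly one preimage of $y_k$, giving $\#f^{-1}(y)=N(c)$, where $N(c)$ is the number of connected components of $F_{\widehat{k}}^{-1}(c)$. Local triviality of $F_{\widehat{k}}$ over $\C^{n-1}\setminus B(F_{\widehat{k}})$ makes $N$ locally constant, and connectedness of this set makes $N \equiv N_0$ for some integer $N_0$. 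Choosing $y$ in the dense open set $(\C^n \setminus S_f) \cap \pi_{\widehat{k}}^{-1}(\C^{n-1} \setminus B(F_{\widehat{k}}))$ yields $N_0 = \mu(f)$. Thus $\#f^{-1}(y) = \mu(f)$ whenever $\pi_{\widehat{k}}(y) \notin Z$, so $y \notin S_f$; this is \ref{enSfBf}. Taking $Z = S_{F_{\widehat{k}}}$ also gives $\pi_{\widehat{k}}(S_f) \subset S_{F_{\widehat{k}}}$.

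For part \ref{enSf}, the inclusion $S_f \subset \pi_{\widehat{k}}^{-1}(\pi_{\widehat{k}}(S_f))$ is trivial. If $S_f = \emptyset$ there is nothing to prove, so assume $S_f$ is a hypersurface of $\C^n$ and let $V$ be any irreducible component, with $\dim V = n-1$. The previous step gives that $W := \overline{\pi_{\widehat{k}}(V)}^{\mathrm{Zar}}$ is an irreducible subvariety of $S_{F_{\widehat{k}}}$, hence $\dim W \leq n-2$. Because $\pi_{\widehat{k}}$ is a coordinate projection, $\pi_{\widehat{k}}^{-1}(W) \cong W \times \C$ is irreducible of dimension $\dim W + 1 \leq n-1$, and contains $V$; equality of dimensions and irreducibility forces $V = \pi_{\widehat{k}}^{-1}(W) = \pi_{\widehat{k}}^{-1}(\pi_{\widehat{k}}(V))$. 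Taking the union over all irreducible components of $S_f$ yields \ref{enSf}. The main obstacle is in \ref{enSfBf}, namely showing that the locally constant count $N(c)$ is in fact globally constant and matches the generic degree $\mu(f)$: the topological simple-connectedness hypothesis enters crucially through Proposition \ref{p:biregC}, while Ehresmann's theorem and Jelonek's results are what make $\C^{n-1} \setminus B(F_{\widehat{k}})$ connected, converting local into global constancy.
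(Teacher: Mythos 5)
Your overall strategy is the paper's: use Proposition \ref{p:biregC} to show that over the complement of $Z$ every point has exactly $N$ preimages under $f$ ($N$ = number of components of the fiber of $F_{\widehat{k}}$), compare $N$ with $\mu(f)$ using that $S_f$ is at most a hypersurface, and then get \ref{enSf} from the fact that $S_f$ is squeezed between cylinders over a hypersurface of $\C^{n-1}$. But there is one genuine error in the way you establish the key structural input. You claim that $B(F_{\widehat{k}})\subset S_{F_{\widehat{k}}}$ and that, ``by Jelonek's theorem, $S_{F_{\widehat{k}}}$ is either empty or an algebraic hypersurface in $\C^{n-1}$.'' This is false: the fibers of $F_{\widehat{k}}\colon\C^n\to\C^{n-1}$ are (noncompact) affine curves, so $F_{\widehat{k}}$ is not proper at any point of its (dense) image, and hence $S_{F_{\widehat{k}}}=\C^{n-1}$. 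Jelonek's hypersurface theorem for the nonproperness set applies to dominant \emph{generically finite} maps such as $f\colon\C^n\to\C^n$ itself, not to $F_{\widehat{k}}$. So the inclusion $B(F_{\widehat{k}})\subset S_{F_{\widehat{k}}}$ is vacuous and gives you nothing. The fact you actually need -- that $B(F_{\widehat{k}})$ is contained in a \emph{proper} algebraic subset (indeed a hypersurface) of $\C^{n-1}$ -- is true, but it comes from the theory of generalized critical values / bifurcation sets of polynomial mappings (the paper cites \cite{J03} and \cite[Corollaire 5.1]{Ve} for exactly this), not from the nonproperness set of $F_{\widehat{k}}$.

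This gap matters in two places: it is what makes $\C^{n-1}\setminus B(F_{\widehat{k}})$ connected and $\pi_{\widehat{k}}^{-1}(\C^{n-1}\setminus B(F_{\widehat{k}}))$ dense in your proof of \ref{enSfBf} (though for \ref{enSfBf} alone you could instead work directly over $\C^{n-1}\setminus Z$ after disposing of the trivial case $Z=\C^{n-1}$, as the paper does), and it is indispensable in \ref{enSf}, where your bound $\dim W\le n-2$ rests entirely on $\pi_{\widehat{k}}(S_f)$ landing in a set of dimension $n-2$; with $S_{F_{\widehat{k}}}=\C^{n-1}$ that bound evaporates. Once you replace $S_{F_{\widehat{k}}}$ by a genuine algebraic hypersurface $Z_0\supset B(F_{\widehat{k}})$ supplied by the generalized-critical-values results, your dimension-count derivation of \ref{enSf} (each irreducible component $V$ of $S_f$ satisfies $V=\pi_{\widehat{k}}^{-1}(\overline{\pi_{\widehat{k}}(V)})$ by irreducibility of $W\times\C$ and equality of dimensions) is a correct and slightly different packaging of the paper's argument, which instead matches the components of $S_f$ with preimages of irreducible components of $Z_0$.
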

\begin{proof} 
We begin with the proof of \ref{enSfBf}. 
Let $Z\subset \C^{n-1}$ be an algebraic set such that $B(F_{\widehat{k}})\subset Z$. 
If $Z = \C^{n-1}$ there is nothing to prove. 
So assume $Z \neq \C^{n-1}$. 
It follows that $L := \C^{n-1} \setminus Z$ is an open connected set (see \cite[Lemma 8.1]{J}) such that $F_{\widehat{k}}|_{F_{\widehat{k}}^{-1}\left( L\right)} : F_{\widehat{k}}^{-1}\left( L\right) \to L$ is a locally trivial fibration, and hence there exists $d_k \in \N$ such that $F_{\widehat{k}}^{-1}(\widetilde{y})$ has $d_k$ connected components for each $\widetilde{y} \in L$.
	
Now let $y \in \pi_{\widehat{k}}^{-1} (L)$
and  $V_1 \cup \cdots \cup V_{d_k}$ the decomposition of $F_{\widehat{k}}^{-1} \left(\pi_{\widehat{k}}(y)\right)$ into its connected components.  From Proposition \ref{p:biregC}, it follows that $f_k|_{V_j} : V_j \to \C$ is a biregular function for each $j = 1, \ldots, d_k$. 
This shows that 
\begin{equation}\label{e90}
\# f^{-1}(y) = d_k, \ \forall y \in \pi_{\widehat{k}}^{-1} (L). 
\end{equation}
Since $S_f$ is an hypersurface and $ \pi_{\widehat{k}}^{-1} (L)$ is open, it follows that $\pi_{\widehat{k}}^{-1} (L)\setminus S_f \neq \emptyset$, which by \eqref{Jf} and \eqref{e90} gives that $d_k = \mu(f)$. 
Therefore it follows that $\pi_{\widehat{k}}^{-1}(L) \subset \C^n \setminus S_f$, proving \ref{enSfBf}. 

Now we prove statement \ref{enSf}. 
We know that $B (F_{\widehat{k}})$ is contained in an algebraic hypersurface $Z\subset \C^{n-1}$, see for instance the main result of \cite{J03} or \cite[Corollaire 5.1]{Ve}. 
Let $ Z_1 \cup \ldots \cup Z_l$ be the decomposition of $Z$ into its irreducible components. 
It follows that $\pi_{\widehat{k}}^{-1}(Z_1) \cup \ldots \cup \pi_{\widehat{k}}^{-1}(Z_l)$ is the decomposition of $\pi_{\widehat{k}}^{-1}(Z)$ into its irreducible components. 
By statement \ref{enSfBf} $S_f \subset \pi_{\widehat{k}}^{-1}(Z)$, and since $S_f$ and $\pi_{\widehat{k}}^{-1}(Z)$ are hypersurfaces, it follows that there are indices $i_1, \ldots, i_j \in \{1, 2, \ldots, l\}$ such that 
$$
S_f = \pi_{\widehat{k}}^{-1}(Z_{i_1}) \cup \cdots \cup \pi_{\widehat{k}}^{-1}(Z_{i_j}) = \pi_{\widehat{k}}^{-1}\left( Z_{i_1} \cup \cdots \cup Z_{i_j}\right). 
$$ 
Therefore $\pi_{\widehat{k}}^{-1} (\pi_{\widehat{k}}(S_f)) = S_f$, proving statement \ref{enSf}. 
\end{proof}

We can also give the 

\begin{proof}[Proof of Theorem  \ref{t:hyper}] 
We have $Z = h^{-1}(0)$, for an irreducible nonsingular polynomial $h: \C^n \to \C$. 
Thus $V := f^{-1}(Z) = g^{-1}(0)$, where $g := h \circ f$, is a smooth algebraic set.
It follows from the assumption and \eqref{Jf} that the restricted mapping $f|_V : V \to Z$ is a cover mapping with degree $\mu(f)$. 
The connected components of $V$, say $V = V_1 \cup \cdots \cup V_{\mu(f)}$, are the irreducible components of $V$.
Let $q_j$ be an irreducible polynomial such that $V_j = q_j^{-1}\{0\}$, $j = 1, \ldots, \mu(f)$.
Since $g$ is nonsingular, it follows that $g = \gamma q_1 \cdots q_{\mu(f)}$, for a suitable $\gamma \in \C$.
	
Since $Z$ is simply connected, the restrictions $f|_{V_i}$ are biholomorphisms onto $Z$. 
So $f|_{V_i} : V_i \to Z \subset \C^n$ are proper regular mappings, and hence it follows by \cite[Theorem 3-(ii)]{RW} that $f|_{V_i}^{-1}: Z \to V_i$ are regular mappings. 
Therefore $f|_{V_i}$ are biregular mappings for $i = 1, \ldots, \mu(f)$. 

We \emph{claim that $q_j|_{V_i} : V_i \to \C$ is constant for each $i \neq j$ in $\{1, \ldots, \mu(f)\}$.}
Indeed, if this is not true for some $i$ and $j$ and $\psi: \C^{n-1} \to Z$ is the biregular mapping from the assumption, the non-constant polynomial $q_j \circ f|_{V_i}^{-1} \circ \psi : \C^{n-1} \to \C$ has a zero, and so $V_i \cap  V_j \neq \emptyset$.
This contradiction proves the claim.
	
So, since $g$ is nonsingular, it follows from the Nullstellensatz that $q_j = \beta_{i j} q_i + \alpha_{i j}$, for polynomials $\beta_{i j}$ and constants $\alpha_{i j}$.
It is simple to conclude that $\beta_{i j}$ are constants and so $g = P(q_1)$, with $P$ a polynomial of degree $\mu(f)$.
Since $g$ is nonsingular it follows that $\mu(f) = 1$.
Therefore, $f$ is injective and hence it is an automorphism from \cite{CR}.
\end{proof}

Pinchuk \cite{Pi} presented a nonsingular polynomial mapping $f:\R^2 \to \R^2$ that is not invertible, providing thus a counterexample to the {\it real Jacobian conjecture}. 
In this example, we have $S_f \cap Z_c = \emptyset$, for any line $Z_c := \{ (c,y) \mid y\in \R\}$ and $c < -1$, see for instance \cite{Cam}. 
Therefore, Theorem \ref{t:hyper} does not hold for nonsingular polynomial mapping $f: \R^n\to\R^n$. 

We now provide the 
\begin{proof}[Proof of Theorem \ref{t:Ra-topV}] 
Since $B (F_{\widehat{n}})$ is contained in a hypersurface $Z\subset \C^{n-1}$ (\cite{J03} or \cite[Corollaire 5.1]{Ve}), it follows from statement \ref{enSfBf} of Theorem \ref{t:proper} that $S_f \subset \pi_{\widehat{n}}^{-1}(Z)$. 
Let $y \notin  \pi_{\widehat{n}}^{-1}(Z)$. 
From \eqref{Jf} and \cite{CR}, it is enough to prove that $\# f^{-1}(y) = 1$. 

From the hypothesis, the fiber $F_{\widehat{n}}^{-1} (\pi_{\widehat{n}}(y))$ is simply connected. 
It thus follows by Proposition \ref{p:biregC} that $f_n$ is injective in this fiber. 
So $\# f^{-1}(y) = 1$, and we are done. 
\end{proof}

In the following remark we present an application of Theorem \ref{t:Ra-topV}: 
\begin{remark}\label{r:Hu}
An important result on the Jacobian conjecture given by Bass, Connel and Wright in \cite{BCW} is 
that the Jacobian conjecture in all dimensions follows if one proves that for all $n \geq 2$, nonsingular polynomial mappings of the form $f = I + H\colon\C^n\to\C^n$, where $I$ is the identity mapping and $H$ is a homogeneous polynomial of degree three, are injective. 
In \cite{hubbers}, Hubbers classified the nonsingular polynomial mappings $I+H$, for $n = 4$, up to linear conjugations, obtaining $8$ families of mappings. 
Then he  proved the bijectivity of each family by applying a criterion described by van den Essen in \cite{E1}, which is based on the calculation of Gr\"obner basis of an ideal defined from the components of $f$. 
Here we give a new and topological proof of the bijectivity of each mapping in Hubbers' classification, using Theorem \ref{t:Ra-topV}. 
Indeed, with the enumeration of \cite[Theorem 2.7]{hubbers} or \cite[Theorem 7.1.2]{E}, it is straightforward to check the simply connectedness of the fibers of:
\begin{itemize}\label{p:hubber}
\renewcommand{\labelitemi}{\ }
\item $F_{\widehat{4}} = (f_1,f_2,f_3)\colon\C^4\to\C^3$ for the families 1), 2), 7) and 8), 
\item $F_{\widehat{2}} = (f_1,f_3,f_4)\colon\C^4\to\C^3$ for the family 3), 
\item $F_{\widehat{3}} = (f_1,f_2,f_4)\colon\C^4\to\C^3$ for the families 4), 5) and 6). 
\end{itemize}
Thus each family is an automorphism by Theorem \ref{t:Ra-topV}. 
\end{remark}

Now, we can also do the 

\begin{proof}[Proof of Theorem \ref{t:main}] By applying statement \ref{enSf} of Theorem \ref{t:proper} for each $k$, it follows there is a set $B\subset \C$ such that $S_f = B\times \C^{n-1}$.
Since $S_f$ is either empty or it is a hypersurface, it then follows there exists $z\in\C$ such that the affine hyperplane $Z=\{z\}\times\C^{n-1}$ is disjoint of $S_f$. 
The result thus follows by Theorem \ref{t:hyper}.
\end{proof}

It is well known that analytic and geometric conditions can be used to estimate $B(F_k)$, see for instance \cite{DRT, J03, KOS, Ra}. Thus, we may use these conditions to ensure the topological hypothesis related to the $B(F_{\widehat{k}})$ in theorems \ref{t:Ra-topV}, \ref{t:main} and  \ref{t:proper}.  
 
\begin{remark}\label{r:MV-S}\rm
Splitting a complex mapping $f:\C^n\to\C^n$ into real and imaginary parts we obtain an associated real mapping $f^{\R}: \R^{2n}\to\R^{2n}$. In \cite[Corollary 2]{MV-S} it was proved that: \emph{the (complex) Jacobian conjecture is equivalent to the simply connectedness of all connected components of fibers of $f^\R_{i_1\ldots i_{2n-2}}$, for all combinations $(i_1<\ldots <i_{2n-2})$ of $\{1,\ldots,2n\}$.} Note that this requires to verify such topological condition for $2n^2-n$ mappings from $\R^{2n}\to\R^{2n-2}$ for mixing real and imaginary parts.
	Our Theorem \ref{t:main} improves this equivalence by proving that it is enough to check the same topological condition just for $n-1$ mappings instead of the $2n^2-n$ cases of \cite{MV-S}.
\end{remark}


\section{On Rabier condition}
In this section we recall the definition of the set $\widetilde K_\infty(g)$, for holomorphic mappings $g: \C^n \to \C^m$. 
We also present the example of a polynomial automorphism $f$ in $\C^3$ such that $\widetilde K_\infty(F_{\widehat{k}}) \neq \emptyset$ for $k = 1, 2, 3$, as mentioned in the introduction section. 
We end this section discussing about our contributions related to already known results.  

\begin{definition}[\label{d:ra-ity}\cite{Ra}]
Let $g\colon\C^n\to\C^m$ be a polynomial mapping, with $n\geq m$.
We set
\begin{equation}\label{eq:ra-ity}
\begin{aligned}
\widetilde K_{\infty}(g) := & \{t\in\C^{m}\mid \exists \{ x_{j}\}_{j\in \mathbb{N}} \subset \C^{n}, \lim_{j\to\infty}\|x_j\|=\infty, \\ 
& \phantom{\{} \lim_{j\to\infty}g(x_j)= t \mathrm{\ and\ }\lim_{j\to\infty} \nu(\Jac(g) (x_j))=0\},
\end{aligned}
\end{equation}
where $\nu(A):=\inf_{\|\varphi\|=1}\|A^*(\varphi)\|$, for a linear mapping $A\colon\C^n\to\C^m$ and its adjoint $A^*\colon(\C^m)^*\to(\C^n)^*$. 
We say that $g$ satisfies the {\it Rabier condition} if $\widetilde K_{\infty}(g)=\emptyset$.
\end{definition}

For  $g:\C^n \to \C$, we have  $ \nu (\nabla g (x)) =   \|\nabla g (x)\|$ and if $g$ is nonsingular, Definition \ref{d:ra-ity} recovers the classical \emph{Palais-Smale condition}.

We observe that different functions instead of $\nu$ produces the same set $\widetilde K_{\infty}(g)$, see for instance \cite{J03, KOS}. 
Other conditions related to $\widetilde K_{\infty}(g)$ can be found for instance in \cite{DRT, KOS}.

The next example shows that for $f:\C^n \to\C^n$, $n\geq 3,$ a version of Theorem \ref{t:Ra}-(a) does not hold if we use the Rabier condition on the mappings $F_{\widehat{k}}:\C^n \to\C^{n-1}$. 

\begin{example}[See \cite{PZ}]\label{ex:noGPS} Let $f = (f_1, f_2, f_3):\C^3 \to \C^3$ be defined by
$$
f_1(x,y,z) = x + y h(x,y,z), \ \ \ \ f_2(x,y,z) = y, \ \ \ \ f_3(x,y,z) = h(x,y,z).
$$
where $h(x,y,z) = z - 3 x^5 y + 2 x^7 y^2$.
We have that $
\det \Jac (f) \equiv 1,$
and that $f$ is an automorphism whose inverse is
\[
f^{-1}(p,q,r) = \left(p-qr, q , r+3q(p-qr)^5 -2q^2(p-qr)^7\right).
\]
We also have  that $F_{\widehat{3}}=(f_1,f_2):\C^3\to\C^2$, $F_{\widehat{2}}=(f_1,f_3):\C^3\to\C^2$ and $F_{\widehat{1}}=(f_1,f_3):\C^3\to\C^2$ do not satisfy the Rabier condition, see Definition \ref{d:ra-ity}. 	In fact, to prove that $\widetilde K_{\infty}(F_{\widehat{3}})\neq \emptyset$, we may use the path $\lambda(t) = (t,1/t^2,0)$, as $t\to\infty$. 
For $F_{\widehat{2}}$ and $F_{\widehat{1}}$, we may use the paths $\gamma(t)=(1/t,t^2,1/t^3)$ and $\delta(t)=(t,1/t^2,t^3)$, respectively. 
\end{example}

\begin{remark}\label{r:gra-lw} 
The proof from \cite{LW} of Theorem \ref{t:LW} depends of the Abhyankar-Moh's result. 
On the other hand, the proof of Theorem \ref{t:Ra}-(b) presented in \cite{Ra} depends of a formula by Adjamagbo and van den Essen \cite[Corollary 1.4]{AV}. 
It is known that a nonsingular polynomial function $f:\C^2 \to \C$ satisfies the Rabier condition (i.e. $\widetilde K_{\infty}(f)=\emptyset$) if and only if $f$ is a locally trivial fibration, see \cite{Ha, Pa, ST2}. 
Thus, the proof of Theorem \ref{t:Ra-topV} gives also different proofs for theorems \ref{t:LW} and \ref{t:Ra}.  
\end{remark} 


\section{The bidimensional case}\label{subsec:vf}

\begin{proof}[Proof of Proposition \ref{teoR2}]
Clearly \ref{en:b} implies \ref{en:a}. 

Assume \ref{en:a}. 
Without loss of generality we assume that $c = 0$ and write $f_1^{-1}(0) = V_1 \cup \cdots \cup V_d$ the decomposition of $f_1^{-1}(0)$ into its connected components. 
Each $V_j$ is an irreducible component of $f_1^{-1}(0)$, and so $V_j = q_j^{-1}(0)$, where $q_j$ is an irreducible polynomial. 
As in the proof of Proposition \ref{p:biregC}, it follows from the Riemann mapping theorem and \cite[Theorem $4$]{RW} that each connected component of $f_1^{-1}(c)$ is biregular to $\C$. 
Since $f_1$ is nonsingular, it follows that $f_1 = \gamma q_1 \cdots q_d$ with $\gamma \in \C$. 
As in the proof of Theorem \ref{t:hyper}, we conclude that $d = 1$, and so $f_1^{-1}(0)$ is biregular to $\C$. 
Now from \cite{AM}, it follows there exists an automorphism $h: \C^2 \to \C^2$ such that $g_1(x_1, x_2) = f_1 \circ h(x_1, x_2) = x_1$. 
Let $g_2(x_1, x_2) = x_2$ and define $f_2(x_1, x_2) = g_2\circ h^{-1}(x_1,x_2)$. 
Then $(f_1, f_2)$ is an automorphism. 
\end{proof}

As we said in the introduction section, the assumptions in  theorems \ref{t:Ra-topV} and \ref{t:main} on a nonsingular polynomial function $f_1: \C^2 \to \C$ are equivalent. 
An open question is to know if a nonsingular polynomial mapping $F_{\widehat{k}}: \C^n \to \C^{n-1}$ whose fibers have connected components simply connected have necessarily connected fibers.

\section*{Acknowledgments}
The first author was partially supported by the Fapesp Grant 2017/00136-0.
The second author was partially supported by the Fapemig-Brazil Grant APQ-00431-14 and CNPq-Brazil grants 401251/2016-0 and 304163/2017-1.
The third author was partially supported by the Fapemig-Brazil Grant APQ-00595-14 and the CNPq-Brazil Grant 446956/2014-7.


\end{document}